\newtheorem{theorem}{Theorem}[section]
\newtheorem*{theorem*}{Theorem}
\newtheorem*{maintheorem*}{Main Theorem}
\newtheorem{lemma}[theorem]{Lemma}
\newtheorem{proposition}[theorem]{Proposition}
\newtheorem*{proposition*}{Proposition}
\newtheorem{corollary}[theorem]{Corollary}
\newtheorem*{corollary*}{Corollary}
\newtheorem{cit}[theorem]{Citation}
\newtheorem*{conjecture*}{Conjecture}
\newtheorem*{question*}{Question}
\theoremstyle{definition}
\newtheorem{definition}[theorem]{Definition}
\newtheorem*{definition*}{Definition}
\newtheorem{example}[theorem]{Example}
\newcommand{\N}{\mathbb{N}}
\newcommand{\Z}{\mathbb{Z}}
\newcommand{\Q}{\mathbb{Q}}
\newcommand{\R}{\mathbb{R}}
\newcommand{\Cantor}{\mathfrak{C}}
\newcommand{\tree}{\mathcal{T}}
\DeclareMathOperator{\Aut}{Aut}
\DeclareMathOperator{\Homeo}{Homeo}
\DeclareMathOperator{\GL}{GL}
\DeclareMathOperator{\Aff}{Aff}
\DeclareMathOperator{\sgn}{sgn}
\DeclareMathOperator{\F}{F}
\DeclareMathOperator{\id}{id}
\numberwithin{equation}{section}
\begin{document}

\title[Embedding f.p. self-similar groups into f.p. simple groups]{Embedding finitely presented self-similar groups\\ into finitely presented simple groups}
\date{\today}
\subjclass[2020]{Primary 20F65;   % ggt;
                 Secondary 20E08} %trees

\keywords{Self-similar group, R\"over--Nekrashevych group, word problem, Boone--Higman conjecture, simple group}

\author[M.~C.~B.~Zaremsky]{Matthew C.~B.~Zaremsky}
\address{Department of Mathematics and Statistics, University at Albany (SUNY), Albany, NY}
\email{mzaremsky@albany.edu}

\begin{abstract}
We prove that every finitely presented self-similar group embeds in a finitely presented simple group. This establishes that every group embedding in a finitely presented self-similar group satisfies the Boone--Higman conjecture. The simple groups in question are certain commutator subgroups of R\"over--Nekrashevych groups, and the difficulty lies in the fact that even if a R\"over--Nekrashevych group is finitely presented, its commutator subgroup might not be. We also discuss a general example involving matrix groups over certain rings, which in particular establishes that every finitely generated subgroup of $\GL_n(\Q)$ satisfies the Boone--Higman conjecture.
\end{abstract}

\maketitle
\thispagestyle{empty}

\section{Introduction}

The Boone--Higman conjecture, posed in the early 1970's by Boone and Higman \cite{boone74}, predicts that every finitely generated group with solvable word problem embeds in a finitely presented simple group. Recall that a group is \emph{simple} if it has no proper non-trivial quotients, and a finitely generated group has \emph{solvable word problem} if there exists an algorithm to determine whether or not any given word in the generators represents the identity. Prominent examples of groups known to satisfy the conjecture include all hyperbolic groups \cite{bbmz_hyp}, and every $\GL_n(\Z)$ \cite{scott84}. Since satisfying the conjecture is inherited by subgroups, this also turns out to establish the conjecture for prominent groups like virtually nilpotent groups and virtually special groups. For a more thorough summary of the state of art on this conjecture, see the survey \cite{bbmz_survey}.

One reason that a full resolution of the conjecture remains elusive is a lack of examples of finitely presented infinite simple groups. If one is striving to prove the conjecture for some given group, this is an immediate impediment. Thus, it is desirable to find sufficient conditions to embed in a finitely presented simple group, that do not actually require one to deal directly with simple groups. The main result of this paper is such a sufficient condition:

\begin{theorem}\label{thrm:main}
Every finitely presented self-similar group embeds in a finitely presented simple group, and hence satisfies the Boone--Higman conjecture.
\end{theorem}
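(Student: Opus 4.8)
The plan is to pass from $G$ to a more tractable self-similar group and then to realize $G$ inside a simple commutator subgroup of a R\"over--Nekrashevych group. Recall that a faithful self-similar action of a group $H$ on the $d$-regular rooted tree yields the R\"over--Nekrashevych group $V(H)$, a group of homeomorphisms of the boundary Cantor set generated by $H$ (acting via its self-similar action) together with a copy of the Higman--Thompson group $V_d$. The first thing to try is the commutator subgroup $[V(G),V(G)]$, which is simple by a theorem of Nekrashevych. This does not work directly, for two reasons: $G$ need not lie in $[V(G),V(G)]$ (for instance, $\Z$ acting by the binary odometer does not), and -- the difficulty flagged in the abstract -- even when $V(G)$ is finitely presented its commutator subgroup need not be, since $V(G)^{\mathrm{ab}}$ contains a copy of $G^{\mathrm{ab}}$ and so is typically infinite, meaning $[V(G),V(G)]$ has infinite index.

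Both obstructions disappear when the self-similar group is \emph{perfect}. Indeed, for a perfect self-similar group $P$ one has $P\le[V(P),V(P)]$ because a perfect subgroup of any group lies in its commutator subgroup; and the image of $P$ in the abelian group $V(P)^{\mathrm{ab}}$ is a quotient of $P^{\mathrm{ab}}=1$, hence trivial, so $V(P)^{\mathrm{ab}}$ is a quotient of the abelianization of the relevant Higman--Thompson group and is therefore finite; hence $[V(P),V(P)]$ has finite index in $V(P)$. Consequently the theorem reduces to the following statement, which I will call $(\ast)$: \emph{every finitely presented self-similar group embeds into a finitely presented perfect self-similar group.}

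Granting $(\ast)$, the rest is assembly. Apply $(\ast)$ to $G$ to obtain an embedding $G\hookrightarrow\hat G$ with $\hat G$ finitely presented, self-similar, and perfect. Then $V(\hat G)$ is finitely presented -- here one invokes the theory of finiteness properties of R\"over--Nekrashevych groups, which carries finite presentability from $\hat G$ up to $V(\hat G)$ -- and by the previous paragraph $[V(\hat G),V(\hat G)]$ has finite index in $V(\hat G)$, hence is finitely presented, and is simple by Nekrashevych's theorem (whose hypotheses the construction will arrange to hold). Thus $G\le\hat G\le[V(\hat G),V(\hat G)]$ embeds $G$ into a finitely presented simple group. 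The Boone--Higman conclusion is then formal: a finitely presented simple group is recursively presented and hence has solvable word problem by Kuznetsov's theorem, a property inherited by finitely generated subgroups.

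The real content, and the step I expect to be the main obstacle, is $(\ast)$. The classical routes to embedding a group into a perfect (indeed acyclic) group -- mitotic or binate towers -- are infinite iterated constructions and destroy finite presentability; and the usual ad hoc tricks (wreath products with finite perfect groups, HNN extensions along contracting endomorphisms, and the like) either fail to kill $G^{\mathrm{ab}}$ or destroy the self-similar structure. The idea for $(\ast)$ is to carry out a mitosis-type argument inside the tree itself: a self-similar group on the $d$-regular tree already carries, in its first-level stabilizer, $d$ pairwise commuting copies of (subgroups of) itself, together with the rigid tree automorphisms permuting the children of the root, and the self-referential nature of the tree is precisely what should allow one to fold the otherwise-infinite tower of copies into a single finitely presented group. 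Concretely, one adjoins finitely many further tree automorphisms so that in the resulting group $\hat G$ every generator of $G$ becomes a product of commutators, and then one must check four things simultaneously: only finitely many new generators and relations appear (so $\hat G$ is finitely presented); the sections of all new elements lie back in $\hat G$ (so the action is genuinely self-similar); the enlargement is injective on $G$ (so $G$ survives); and $\hat G$ is perfect. Arranging all four at once, with only finitely much new data, is the crux.
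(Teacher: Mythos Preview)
Your reduction to the claim $(\ast)$ is sound: if a finitely presented self-similar group $\hat G$ is perfect, then indeed $\hat G\le[V(\hat G),V(\hat G)]$, and the Nekrashevych description of $V_d(\hat G)^{\mathrm{ab}}$ shows it is finite, so the commutator subgroup is finitely presented and simple. The gap is that you do not prove $(\ast)$. Your ``mitosis inside the tree'' sketch names four conditions that must hold simultaneously and then stops; you yourself identify this as the crux, and nothing you have written gives a mechanism for adjoining finitely many tree automorphisms that kill $G^{\mathrm{ab}}$ while keeping the section map closed on the enlarged group. Absent such a mechanism, the proof is incomplete at exactly the point where the work lies.

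The paper avoids $(\ast)$ entirely, and this is worth knowing because the route is both easier and conceptually different. Rather than enlarging $G$ to a perfect group, the paper keeps $G$ fixed and changes the \emph{self-similar representation}: from the action on $\tree_d$ with wreath recursion $g\leftrightarrow\sigma(g_1,\dots,g_d)$ one manufactures, for each $m$, a faithful self-similar action on $\tree_{md}$ with recursion $g\leftrightarrow\sigma^{\oplus m}(g_1,\dots,g_d,\dots,g_1,\dots,g_d)$. For even $m$ the abelianization of $V_{md}(G)$ is $\overline G$ modulo the relations $\overline g=m(\overline g_1+\cdots+\overline g_d)$, and an elementary eigenvalue argument (choose $m$ so that $1/m$ is not an eigenvalue of the relevant $n\times n$ integer matrix) forces this quotient to be finite. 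There is then a last step you did not anticipate: $G$ need not land in $[V_{d'}(G),V_{d'}(G)]$ directly, only a finite-index subgroup $H$ does, and one uses the Kaloujnine--Krasner embedding $G\hookrightarrow(G/H)\wr H$ together with a separate lemma that $F\wr[V_{d'}(G),V_{d'}(G)]$ embeds in $[V_{d'}(G),V_{d'}(G)]$ for any finite $F$. So the paper trades your unproved group-theoretic embedding $(\ast)$ for a change of tree plus a short linear-algebra computation; your approach would be interesting if $(\ast)$ could be established, but as written it is a program, not a proof.
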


See Definition~\ref{def:ss} for the definition of self-similar group. Intuitively, a self-similar group is a group of automorphisms of a regular rooted tree that resembles itself when restricted to certain subtrees. Finding a finitely presented self-similar group in which to embed is, in theory, an easier problem than finding a finitely presented simple group in which to embed, at least for groups that stand a chance of embedding in a self-similar group (e.g., such a group is necessarily residually finite). The simple groups used in Theorem~\ref{thrm:main} are certain commutator subgroups of R\"over--Nekrashevych groups, which are always simple, but in general are not always finitely presented, so some care in needed. We should mention that the proof of Theorem~\ref{thrm:main} also shows that every self-similar group of type $\F_n$ embeds in a simple group of type $\F_n$, though we will not focus on this here.

As an example, we establish a general criterion in Example~\ref{ex:ring} for $\GL_n(R)$ to embed in a finitely presented self-similar group, for $R$ a ring with certain properties. Thanks to Theorem~\ref{thrm:main}, this criterion does not require any knowledge about the wreath recursions involved, how they interact in the abelianization, or which self-similar representations lead to nice R\"over--Nekrashevych groups. We use this framework to prove the following:

\begin{theorem}\label{thrm:Q}
Every finitely generated subgroup of $\GL_n(\Q)$ satisfies the Boone--Higman conjecture.
\end{theorem}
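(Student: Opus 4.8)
The plan is to reduce everything to Theorem~\ref{thrm:main} by exhibiting, for each finitely generated subgroup $G \le \GL_n(\Q)$, a finitely presented self-similar group containing $G$. By the subgroup-heredity of the Boone--Higman conjecture noted in the introduction, it suffices to embed $G$ in \emph{some} finitely presented self-similar group; we do not need $G$ itself to be self-similar. The first step is to clear denominators: a finitely generated subgroup $G \le \GL_n(\Q)$ uses only finitely many primes in the denominators of its (finitely many) generators and their inverses, so $G \le \GL_n(\Z[1/m])$ for a suitable integer $m$, and in fact $G \le \GL_n(R)$ where $R = \Z[S^{-1}]$ for a finite set of primes $S$. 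So it is enough to prove that $\GL_n(\Z[1/m])$ embeds in a finitely presented self-similar group.

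The second step is to invoke the general criterion of Example~\ref{ex:ring}: I would check that $R = \Z[1/m]$ satisfies whatever ring-theoretic hypotheses are imposed there (one expects conditions along the lines of $R$ being a finitely generated commutative ring admitting a suitable self-similar action on a rooted tree — concretely, $\Z[1/m]$ acts on the $m$-ary rooted tree via the standard odometer-type action realizing the $m$-adic expansion, which is self-similar). Granting that $\GL_n(\Z[1/m])$ embeds in a finitely presented self-similar group, one then needs to know that $\GL_n(\Z[1/m])$ is \emph{finitely presented}; this is where the hypothesis on $n$ and the ring matters. For $\Z[1/m]$ this is classical: $\GL_n(\Z[1/m])$ (equivalently $\SL_n(\Z[1/m])$ up to finite-index/central issues) is finitely presented for all $n \ge 3$ by work on $S$-arithmetic groups (Borel--Serre, Rehn, etc.), and for $n \le 2$ one checks the small cases directly ($\GL_1(\Z[1/m]) \cong \{\pm1\}\times\Z^{|S|}$ is finitely presented; $\SL_2(\Z[1/m])$ is finitely presented as it is an $S$-arithmetic group of the right rank, or one handles it via its action on a product of trees and the hyperbolic plane). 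With finite presentability of the ambient group in hand, the self-similar embedding from Example~\ref{ex:ring} lands $\GL_n(\Z[1/m])$ inside a finitely presented self-similar group, Theorem~\ref{thrm:main} applies, and $G \le \GL_n(\Z[1/m])$ inherits the conclusion.

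The main obstacle I anticipate is verifying the hypotheses of Example~\ref{ex:ring} for $\Z[1/m]$ — in particular producing the self-similar action and checking that the resulting group built from $\GL_n(\Z[1/m])$ is genuinely finitely presented as a self-similar group, rather than merely finitely generated. A secondary subtlety is the low-rank cases $n=1,2$, where general $S$-arithmetic finite-presentation results are thinnest and must be argued by hand. Everything else — passing from $\Q$ to $\Z[1/m]$, and the final appeal to Theorem~\ref{thrm:main} plus subgroup-heredity — is routine.
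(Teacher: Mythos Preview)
Your overall strategy matches the paper's: reduce to $\GL_n(\Z[1/m])$, invoke Example~\ref{ex:ring} to embed $\GL_n(\Z[1/m])$ into a self-similar semidirect product $\Z[1/m]^{\oplus n}\rtimes\GL_n(\Z[1/m])$, verify finite presentability, and apply Theorem~\ref{thrm:main}. But two concrete points need correcting.

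First, your proposed self-similar action is wrong. You suggest an ``$m$-adic'' odometer action on the $m$-ary tree, but $m$ is a unit in $\Z[1/m]$, so $m\Z[1/m]=\Z[1/m]$ is not a proper ideal and there is no nontrivial $m$-adic filtration. Example~\ref{ex:ring} requires a principal ideal $J=Rx$ of finite index with $\bigcap_k Rx^k=\{0\}$; the paper takes $x=p$ for a prime $p$ \emph{not} dividing $m$, so that $J=p\Z[1/m]$ has index $p$ and the intersection condition holds. This choice of a prime coprime to $m$ is exactly the missing idea.

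Second, the finite presentability you need is not that of $\GL_n(\Z[1/m])$ alone but of the full self-similar group $\Z[1/m]^{\oplus n}\rtimes\GL_n(\Z[1/m])$. Since $\Z[1/m]^{\oplus n}$ is not finitely generated as an abelian group, this does not follow formally from finite presentability of the $\GL_n$ factor. The paper cites Borel--Serre for $\GL_n(\Z[1/m])$ uniformly (so your case analysis on $n$ is unnecessary), and then argues by hand: the diagonal matrices $\gamma_j=\mathrm{diag}(1,\dots,1/m,\dots,1)$ conjugate a single generator of each $\Z[1/m]$ summand onto all the others, and one checks that the resulting infinite list of relations collapses to a finite one. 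You anticipated this as an obstacle, which is fair, but it is the main content of the proof and needs to actually be carried out.
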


This is striking since it is an open problem to find an explicit finitely presented group (even a non-simple one) into which $\GL_n(\Q)$ embeds. See also \cite[Problem~5.3(5)]{bbmz_survey}. Also, we should mention that perhaps the most prominent family of finitely generated subgroups of $\GL_n(\Q)$ is $S$-arithmetic groups (in characteristic 0) \cite{borel76}, so now we know they all satisfy the Boone--Higman conjecture.

\medskip

As a remark, it is not difficult to adapt our proof for finitely presented self-similar groups to also handle finitely generated self-similar groups that are ``contracting'' as in \cite{nekrashevych05}, even if they are not finitely presented. However, this is not new: by \cite{bbmz_hyp}, all contracting self-similar groups satisfy the Boone--Higman conjecture (not even requiring finite generation). In fact, the result in \cite{bbmz_hyp} implies that every R\"over--Nekrashevych group of a contracting self-similar group satisfies the conjecture, which is stronger still. The result in \cite{bbmz_hyp} is also much more general, dealing with so called contracting rational similarity groups; in \cite{belkmatuccicontracting}, Belk and Mutucci spell out the proof just for contracting self-similar and contracting R\"over--Nekrashevych groups.

\medskip

This paper is organized as follows. In Section~\ref{sec:ssrn} we recall some background on self-similar groups and R\"over--Nekrashevych groups, and prove a result about embedding wreath products (Proposition~\ref{prop:wreath}). In Section~\ref{sec:proof_main} we prove our main result, Theorem~\ref{thrm:main}. In Section~\ref{sec:example} we establish our general example involving matrix groups over certain rings, and prove Theorem~\ref{thrm:Q}.

\subsection*{Acknowledgments} Some of the ideas in Subsection~\ref{ssec:examples} arose from conversations at the conference \emph{Topological and Homological Methods in Group Theory 2024}, in Bielefeld, Germany in March 2024, in particular with Henry Bradford, Francesco Fournier-Facio, Eduard Schesler, Stefan Witzel, Xiaolei Wu, and Julian Wykowski. I am also grateful to Jim Belk, Francesco Matucci, Mark Pengitore, and Rachel Skipper for helpful discussions, and the referees for some excellent suggestions. The author is supported by grant \#635763 from the Simons Foundation.

%-------------------------------------------------
\section{Self-similar groups and R\"over--Nekrashevych groups}\label{sec:ssrn}

Let $d\ge 2$ and let $\tree_d$ be the rooted $d$-regular tree. This is the tree whose vertex set is $\{1,\dots,d\}^*$, the set of finite words in the alphabet $\{1,\dots,d\}$, with an edge from $w$ to $wi$ for each vertex $w$ and each $i\in\{1,\dots,d\}$. The empty word $\varnothing$ is the \emph{root} of the tree. For each $w$, the vertices $w1,\dots,wd$ are the \emph{children} of $w$, and any vertex of the form $ww'$ for $w'\in\{1,\dots,d\}^*$ is a \emph{descendent} of $w$. If two vertices are not descendents of each other, call them \emph{incomparable}.

An \emph{automorphism} of $\tree_d$ is a self-bijection of $\{1,\dots,d\}^*$ that preserves adjacency. Denote by $\Aut(\tree_d)$ the group of all automorphisms of $\tree_d$. Since the root is the only vertex of degree $d$ (the others all have degree $d+1$), every automorphism fixes it. Thus, the set of children of the root is stabilized by every automorphism. This gives us a map $\Aut(\tree_d) \to S_d$ onto the symmetric group, which splits via the action of $S_d$ on $\tree_d$ that just permutes the first letter of a word. The kernel of this map is the pointwise stabilizer of the set of children of the root, which is isomorphic to $\Aut(\tree_d)^d$. Moreover, $S_d$ acts by conjugation on $\Aut(\tree_d)^d$ by permuting the coordinates. All in all, we have an isomorphism
\[
\Aut(\tree_d) \to S_d \wr_d \Aut(\tree_d)\text{.}
\]
Here we write the acting group on the left in the wreath product, for future convenience. The notation $\wr_d$ indicates that this is the wreath product coming from the action of $S_d$ specifically on $\{1,\dots,d\}$.

\begin{definition}[Self-similar]\label{def:ss}
A subgroup $G\le \Aut(\tree_d)$ is \emph{self-similar} if its image in $S_d\wr_d\Aut(\tree_d)$ under the above isomorphism is contained in $S_d\wr_d G$.
\end{definition}

See \cite{nekrashevych05} for more background on self-similar groups.

Given $g\in\Aut(\tree_d)$, if the image of $g$ under the isomorphism $\Aut(\tree_d)\to S_d \wr_d \Aut(\tree_d)$ is $\sigma(g_1,\dots,g_d)$, then we write
\[
g \leftrightarrow \sigma(g_1,\dots,g_d)
\]
and refer to this as the \emph{wreath recursion} of $g$. The $g_i$ are called the \emph{level-1 states} of $g$.

\medskip

The boundary at infinity of $\tree_d$ is the usual $d$-ary Cantor set $\Cantor_d\coloneqq \{1,\dots,d\}^\N$. Since an automorphism of $\tree_d$ fixing the boundary pointwise must be trivial, we have an embedding $\Aut(\tree_d)\to \Homeo(\Cantor_d)$. For each $w\in\{1,\dots,d\}^*$, define the \emph{cone} on $w$ to be $\Cantor_d(w)=\{w\omega\mid \omega\in\Cantor_d\}$, and let
\[
h_w \colon \Cantor_d \to \Cantor_d(w)
\]
be the \emph{canonical homeomorphism} sending $\omega$ to $w\omega$.

Now we describe a construction that takes as input a self-similar group $G\le\Aut(\tree_d)$ and outputs a group $V_d(G)$ of homeomorphisms of $\Cantor_d$ that has simple commutator subgroup, thus entering the world of simple groups.

\begin{definition}[R\"over--Nekrashevych group]\label{def:rn}
Let $G\le \Aut(\tree_d)$ be self-similar. The \emph{R\"over--Nekrashevych group} $V_d(G)$ is the group of homeomorphisms of $\Cantor_d$ of the following form:
\begin{itemize}
    \item Partition $\Cantor_d$ into some number $n$ of cones $\Cantor_d(w_1),\dots,\Cantor_d(w_n)$.
    \item Partition $\Cantor_d$ again, into the same number of cones $\Cantor_d(w_1'),\dots,\Cantor_d(w_n')$.
    \item Now for each $1\le i\le n$, send $\Cantor_d(w_i)$ homeomorphically to $\Cantor_d(w_i')$ via $h_{w_i'} \circ g_i \circ h_{w_i}^{-1}$ for some $g_i\in G$.
\end{itemize}
\end{definition}

When $G=\{1\}$ we get the classical Higman--Thompson group $V_d=V_d(\{1\})$, as in \cite{higman74}. This general construction of $V_d(G)$ was first envisioned by Scott in \cite{scott84general}, in much different language. Subsequently, in \cite{roever99} R\"over analyzed the case when $G$ is the Grigorchuk group from \cite{grigorchuk80}, and in \cite{nekrashevych04} Nekrashevych solidified the groups in full generality and with modern terminology. One notable application of R\"over--Nekrashevych groups is that they lead to the first examples of simple groups with arbitrary finiteness length \cite{skipper19}.

\medskip

The following two results make clear why R\"over--Nekrashevych groups are of interest to the Boone--Higman conjecture.

\begin{cit}\cite[Theorem~2]{scott84general}\label{cit:fp}
For any self-similar $G\le\Aut(\tree_d)$, if $G$ is finitely presented then so is $V_d(G)$.
\end{cit}

\begin{cit}\cite[Theorem~4.7]{nekrashevych18}\label{cit:simple}
For any self-similar $G\le\Aut(\tree_d)$, the commutator subgroup $[V_d(G),V_d(G)]$ is simple.
\end{cit}

The reason that this does not immediately help with the conjecture is that even if $V_d(G)$ is finitely presented, its commutator subgroup might not be. Of course if the abelianization of $V_d(G)$ is finite, then the commutator subgroup has finite index and is thus finitely presented. Our goal in the next section is therefore to cleverly avoid the infinite abelianization case.

First we record the following result about wreath products. Recall our convention that in wreath products the acting group is on the left. Also, we will write $\wr$ (with no subscript) to indicate that it is the wreath product coming from the acting group acting on itself by left translation.

\begin{proposition}\label{prop:wreath}
Let $G\le \Aut(\mathcal{T}_d)$ be a self-similar group of automorphisms of the $d$-ary tree $\mathcal{T}_d$. Let $V_d(G)$ be the corresponding R\"over--Nekrashevych group. For any finite group $F$, the wreath product $F\wr [V_d(G),V_d(G)]$ embeds into $[V_d(G),V_d(G)]$.
\end{proposition}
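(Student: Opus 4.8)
The plan is to realize $F\wr H$ concretely inside $H$, where I abbreviate $H\coloneqq[V_d(G),V_d(G)]$ and $m\coloneqq|F|$, using disjoint cones of $\Cantor_d$: one block of $m$ cones will carry the base group $\prod_F H$, and a copy of $F$ will permute the cones. The elementary building block is this: for any word $w\in\{1,\dots,d\}^*$, conjugating by the canonical homeomorphism $h_w$ and extending by the identity on $\Cantor_d\setminus\Cantor_d(w)$ gives an injective homomorphism $\iota_w\colon V_d(G)\to V_d(G)$ with image supported on $\Cantor_d(w)$ (one sees this by writing $\iota_w(a)$ out as an element in the form of Definition~\ref{def:rn}). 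Since $\iota_w$ is a homomorphism, $\iota_w([a,b])=[\iota_w(a),\iota_w(b)]$, so $\iota_w$ restricts to an injective homomorphism $H\to H$. Now fix a level $\ell$ with $d^\ell\ge 2m$ and choose $2m$ distinct level-$\ell$ vertices, labeled $u_f$ and $v_f$ for $f\in F$; their cones are pairwise disjoint. Using the $u_f$-cones, the map $(g_f)_{f\in F}\mapsto\prod_{f\in F}\iota_{u_f}(g_f)$ is an injective homomorphism $\prod_F H\hookrightarrow H$, because the factors commute (disjoint supports) and each lies in $H$.

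Next I would build the acting copy of $F$. For $f\in F$ let $\tilde\rho(f)\in V_d(G)$ be the purely permutational element (all the maps $g_i$ of Definition~\ref{def:rn} trivial) that carries $\Cantor_d(u_h)$ onto $\Cantor_d(u_{fh})$ and $\Cantor_d(v_h)$ onto $\Cantor_d(v_{fh})$ via the evident canonical homeomorphisms and fixes the remaining cones pointwise. Because canonical homeomorphisms compose canonically, $\tilde\rho\colon F\to V_d(G)$ is an injective homomorphism, and a short computation with cone tables shows that $\tilde\rho(f)$ conjugates $\prod_h\iota_{u_h}(g_h)$ to $\prod_h\iota_{u_h}(g_{f^{-1}h})$; that is, $\tilde\rho(F)$ acts on the embedded base group exactly by the left-translation action defining $F\wr H$. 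Since every $\tilde\rho(f)$ with $f\ne 1$ moves $\Cantor_d(u_1)$ off itself whereas the base copy preserves $\Cantor_d(u_1)$, the two subgroups meet trivially, so together they generate a copy of $F\wr H$ inside $V_d(G)$. Provided $\tilde\rho(f)\in H$ for all $f$, this copy of $F\wr H$ lies inside $H$, which is exactly what we want; so it remains to prove that $\tilde\rho(f)\in H$.

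This last point is the crux, and it is where the (seemingly wasteful) choice of $2m$ rather than $m$ cones earns its keep. The element $\tilde\rho(f)$ lies in the subgroup $V_d\le V_d(G)$ of purely permutational elements, that is, the classical Higman--Thompson group. Had one used only the $u_f$-cones, the permutation of cones induced by left translation by $f$ could be an odd permutation, and $V_d$ carries a nontrivial abelianization detecting exactly this sign when $d$ is odd, so $\tilde\rho(f)$ might escape $H$. Doubling the cones repairs this: the permutation that $\tilde\rho(f)$ induces on the $2m$ chosen cones is the disjoint union of two copies of left translation by $f$, hence an even permutation. I would then invoke the elementary fact that any purely permutational element of $V_d$ inducing an even permutation lies in $[V_d,V_d]\le H$: a $3$-cycle of cones $(C_1\,C_2\,C_3)$ equals the commutator $[(C_1\,C_2),(C_1\,C_3)]$ of two cone transpositions, each an element of $V_d$, and the alternating group is generated by $3$-cycles. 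Hence $\tilde\rho(f)\in H$, which completes the proof. The only genuine obstacle here is this abelianization/sign issue; everything else is routine bookkeeping with cone tables.
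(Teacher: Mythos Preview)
Your proof is correct and follows essentially the same strategy as the paper: embed $\prod_F H$ on a family of disjoint cones via conjugation by canonical homeomorphisms, and realise $F$ by prefix-replacement permutations of those cones. The one substantive difference is how you force the copy of $F$ into the commutator subgroup. The paper simply remarks that ``up to possibly allowing $F$ to act non-trivially outside the cones $\Cantor_d(w_i)$ we can assume it is an embedding into $[V_d(G),V_d(G)]$,'' leaving the mechanism implicit. Your doubling trick---using $2m$ cones with $F$ acting diagonally on both blocks, so that the induced permutation is automatically even and hence lies in $[V_d,V_d]\le H$---is a clean, explicit implementation of exactly this idea, and your justification via $3$-cycles being commutators is sound. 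So your argument is the same in outline but more careful at the one genuinely delicate point.
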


\begin{proof}
Let $n=|F|$. For each $1\le i\le n$, let $w_i=1^i 2 \in \{1,\dots,d\}^*$. Note that $w_1,\dots,w_n$ are pairwise incomparable vertices of $\tree_d$. For each $i$, consider the canonical homeomorphism $h_{w_i}\colon \Cantor_d \to \Cantor_d(w_i)$. This conjugates $\Homeo(\Cantor_d)$ isomorphically to $\Homeo(\Cantor_d(w_i))$, which can be viewed as a subgroup of $\Homeo(\Cantor_d)$ by extending every homeomorphism of $\Cantor_d(w_i)$ to a homeomorphism of $\Cantor_d$ by declaring that it acts trivially on $\Cantor_d\setminus\Cantor_d(w_i)$. In particular, for $i\ne j$ the subgroups $\Homeo(\Cantor_d(w_i))$ and $\Homeo(\Cantor_d(w_j))$ of $\Homeo(\Cantor_d)$ commute. From the definition of $V_d(G)$, it is clear that the image of $V_d(G)$ in $\Homeo(\Cantor_d(w_i))$ under this conjugation lands in $V_d(G)$, and hence the image of $[V_d(G),V_d(G)]$ lands in $[V_d(G),V_d(G)]$. By now we have embedded $([V_d(G),V_d(G)])^n$ into $[V_d(G),V_d(G)]$. Now fix a bijection $F\leftrightarrow\{w_1,\dots,w_n\}$, and use the action of $F$ on itself by translation to induce an action of $F$ on $\Cantor_d$ by permuting the cones $\Cantor_d(w_i)$ via the ``prefix replacement'' homeomorphisms $h_{w_j}\circ h_{w_i}^{-1}$ (and acting trivially outside these cones). This embeds $F$ into $V_d(G)$, in fact into $V_d$. Up to choosing a pair of disjoint cones that are also disjoint from all the cones $\Cantor_d(w_i)$, and extending this action of $F$ so that any elements acting by odd permutations also transpose those cones via prefix replacements, we can assume all elements of $F$ act as even permutations. By \cite{higman74} this means our embedding of $F$ lands in $[V_d,V_d]$, hence in  $[V_d(G),V_d(G)]$. Our embedded copy of $([V_d(G),V_d(G)])^n$ is normalized by our embedded copy of $F$, and the conjugation action just permutes the entries, and so we have embedded $F\wr [V_d(G),V_d(G)]$ into $[V_d(G),V_d(G)]$.
\end{proof}

%-------------------------------------------------
\section{Proof of the main result}\label{sec:proof_main}

We now prove our main result, Theorem~\ref{thrm:main}, that every finitely presented self-similar group embeds in a finitely presented simple group.

\begin{proof}[Proof of Theorem~\ref{thrm:main}]
Let $G\le \Aut(\mathcal{T}_d)$ be a finitely presented self-similar group of automorphisms of the $d$-ary tree $\mathcal{T}_d$. Let $V_d(G)$ be the corresponding R\"over--Nekrashevych group. By Citation~\ref{cit:simple}, the commutator subgroup $[V_d(G),V_d(G)]$ is simple. By Citation~\ref{cit:fp}, since $G$ is finitely presented so is $V_d(G)$. Thus, if $V_d(G)$ has finite abelianization, its commutator subgroup is a finitely presented simple group. If the abelianization is infinite though, there is no reason \emph{a priori} to expect $[V_d(G),V_d(G)]$ to be finitely presented. Our strategy is to prove that there exists a possibly different embedding of $G$ into $\Aut(\mathcal{T}_{d'})$ for a possibly different $d'$, such that this copy of $G$ is still self-similar, but now the R\"over--Nekrashevych group $V_{d'}(G)$ for this self-similar representation of $G$ has finite abelianization.

The abelianization of any R\"over--Nekrashevych group $V_d(G)$ is given by a straightforward procedure, described for example in \cite[Theorem~4.8]{nekrashevych18}. For each $g\in G$, consider the wreath recursion $g\leftrightarrow \sigma(g_1,\dots,g_d)$. Let $\overline{G}$ be the abelianization of $G$, and write $\overline{g}$ for the image of $g\in G$ in $\overline{G}$. For $d$ even, the abelianization of $V_d(G)$ is the result of taking $\overline{G}$ and modding out the relation $\overline{g} = \overline{g}_1+\cdots+\overline{g}_d$ coming from the above wreath recursion, for every $g\in G$. For $d$ odd, the abelianization is the result of taking $\overline{G}\oplus (\Z/2\Z)$ and modding out the relation $\overline{g} = \overline{g}_1+\cdots+\overline{g}_d+\sgn(\sigma)$ coming from the above wreath recursion, for every $g\in G$. Here $\sgn(\sigma)\in\Z/2\Z$ equals $0+2\Z$ if $\sigma$ is an even permutation and $1+2\Z$ if $\sigma$ is an odd permutation.

Given $G\le \Aut(\mathcal{T}_d)$ as above, and given $m\in\mathbb{N}$, consider the action of $G$ on $\mathcal{T}_{md}$ defined by the wreath recursion
\[
g \leftrightarrow \sigma^{\oplus m} (g_1,\dots,g_d,g_1,\dots,g_d,\dots,g_1,\dots,g_d) \text{,}
\]
where $\sigma^{\oplus m}$ is the element of $S_{md}$ that acts like $\sigma$ on $\{1,\dots,d\}$, acts like $\sigma$ in the obvious way on $\{d+1,\dots,2d\}$, and so forth. Since the original action of $G$ is self-similar, each $g_i$ lies in $G$, and so this new action is also self-similar. Since the original action is faithful, it is easy to see that this new action is also faithful. Thus, we have embedded $G$ into $\Aut(\mathcal{T}_{md})$ as a self-similar group with this wreath recursion. Note that if $m$ is even then the abelianization of $V_{md}(G)$ is given by taking the abelianization of $G$ and then modding out the relation $\overline{g} = m(\overline{g}_1+\cdots+\overline{g}_d)$ for every $g\in G$. For convenience we will just focus on even $m$ from now on.

We now claim that there exists some even $m$ such that this abelianization is finite. First note that it suffices to confirm finiteness after modding out just those equations coming from every generator of $G$; say $G$ is generated by $\{a_1,\dots,a_n\}$. In other words, we want to prove that if we take the free abelian group generated by $\overline{a}_1,\dots,\overline{a}_n$, and mod out the relation $\overline{a}_i = m\big((\overline{a}_i)_1+\cdots+(\overline{a}_i)_d\big)$ for every $1\le i\le n$, we get a finite abelian group for some even $m$. Equivalently, we can take $\R^n$ with standard basis $\{\overline{a}_1,\dots,\overline{a}_n\}$, mod out the span of all the $\overline{a}_i - m\big((\overline{a}_i)_1+\cdots+(\overline{a}_i)_d\big)$, and prove we get 0 for some even $m$. Set $A$ equal to the $n$-by-$n$ matrix whose $i$th column is $(\overline{a}_i)_1+\cdots+(\overline{a}_i)_d$, and choose some even number $m\in\N$ such that $1/m$ is not an eigenvalue of $A$. Then $1$ is not an eigenvalue of $mA$, so the columns of $I_n-mA$ span all of $\R^n$. This confirms that modding out the span of all the $\overline{a}_i - m((\overline{a}_i)_1+\cdots+(\overline{a}_i)_d)$ yields the trivial vector space.

By now we have proved that our finitely presented group $G$ admits a faithful self-similar representation into some $\Aut(\mathcal{T}_{d'})$ such that $V_{d'}(G)$ has finite abelianization. Thus, the commutator subgroup $[V_{d'}(G),V_{d'}(G)]$ has finite index in $V_{d'}(G)$, hence is finitely presented by Citation~\ref{cit:fp}. It is also simple by Citation~\ref{cit:simple}, so now it suffices to prove that $G$ embeds in $[V_{d'}(G),V_{d'}(G)]$. Since $G$ embeds in $V_{d'}(G)$, some finite index subgroup $H\le G$ embeds in $[V_{d'}(G),V_{d'}(G)]$. Without loss of generality $H$ is normal in $G$, so by \cite{KK} $G$ embeds in the wreath product $(G/H) \wr H$. Since $F\wr [V_{d'}(G),V_{d'}(G)]$ embeds in $[V_{d'}(G),V_{d'}(G)]$ for any finite group $F$ by Proposition~\ref{prop:wreath}, we finish with embeddings
\[
G \hookrightarrow (G/H)\wr H \hookrightarrow [V_{d'}(G),V_{d'}(G)] \text{.}
\]
\end{proof}

%-------------------------------------------------
\section{Examples from virtual endomorphisms}\label{sec:example}

One convenient way to produce examples of self-similar groups is via virtual endomorphisms; see \cite{nekrashevych02} for more background. A \emph{virtual endomorphism} of a group $A$ is a homomorphism from a finite index subgroup of $A$ to $A$. Let us call a virtual endomorphism $\phi$ of $A$ \emph{proper} if the intersection of the iterated preimages $\phi^{-n}(A)$ for all $n\in\N$ is trivial. For example, $\phi\colon 2\Z\to\Z$ sending $2m$ to $m$ is a proper virtual endomorphism, since $\phi^{-n}(\Z)=2^n\Z$, whereas the usual inclusion $2\Z\to\Z$ is not proper. Note that the $\phi^{-n}(A)$ all have finite index, so any group admitting a proper virtual endomorphism must be residually finite. Also note that the kernel of $\phi$ lies in every $\phi^{-n}(A)$, and so proper virtual endomorphisms are necessarily injective.

Virtual endomorphisms, even proper ones, need not be surjective, for example the virtual endomorphism $4\Z\to \Z$ sending $4m$ to $2m$ is proper but not surjective. One reason to focus on surjective virtual endomorphisms however, is that the finite indices $[\phi^{-n}(A)\colon \phi^{-(n+1)}(A)]$ are all the same \cite[Proposition~2.1]{nekrashevych02}.

\begin{definition}[Tree of cosets]
Let $\phi$ be a virtual endomorphism of a group $A$. Let $\tree_\phi$ be the tree with vertex set $\coprod_{n\ge 0} A/\phi^{-n}(A)$, i.e., with a vertex for each coset of a subgroup of the form $\phi^{-n}(A)$ (with $\phi^{-0}(A)$ interpreted to mean $A$, which is the root of the tree), and with an edge from $a\phi^{-n}(A)$ to $a\phi^{-(n+1)}(A)$ for each $n$ and each $a\in A$.
\end{definition}

The action of $A$ on the sets of cosets of each $\phi^{-n}(A)$ induces an action of $A$ by automorphisms on $\tree_\phi$. Note that if $\phi$ is surjective, so all the finite indices $[\phi^{-n}(A)\colon \phi^{-(n+1)}(A)]$ are the same, say they equal $d$, then $\tree_\phi\cong \tree_d$.

\medskip

We now introduce some new definitions, which will lead to the sorts of groups we care about acting nicely on certain trees of cosets.

\begin{definition}[$\Gamma$-stable, mutually stable]
Let $A$ be a group and $\Gamma\le \Aut(A)$. Let $\phi$ be a virtual endomorphism of $A$. We say that $\phi$ is \emph{$\Gamma$-stable} if $\gamma(\phi^{-n}(A))=\phi^{-n}(A)$ for all $\gamma\in\Gamma$ and all $n\in\N$. Now assume $\phi$ is bijective. Call $\phi$ and $\Gamma$ \emph{mutually stable} if $\phi$ is $\Gamma$-stable and $\phi \Gamma\phi^{-1}\le \Gamma$.
\end{definition}

Note that if $\phi$ is $\Gamma$-stable and $\phi$ is bijective then the expression $\phi \Gamma \phi^{-1}$ makes sense; more explicitly this is the set of all $\phi\circ \gamma|_{\phi^{-1}(A)}\circ \phi^{-1}$ for $\gamma\in\Gamma$. If $\phi$ is $\Gamma$-stable, then the action of $A$ on $\tree_\phi$ extends to an action of $A\rtimes\Gamma$ on $\tree_\phi$, via
\[
(a,\gamma).(a'\phi^{-n}(A)) = a\gamma(a')\phi^{-n}(A) \text{.}
\]

\begin{lemma}[Faithful]\label{lem:faithful}
Let $\phi$ be a surjective, proper virtual endomorphism of a group $A$. Let $\Gamma\le\Aut(A)$ such that $\phi$ is $\Gamma$-stable. Then the action of $A\rtimes \Gamma$ on $\tree_\phi$ is faithful.
\end{lemma}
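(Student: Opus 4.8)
The plan is to show that any element $(a,\gamma) \in A \rtimes \Gamma$ acting trivially on $\tree_\phi$ must be the identity. Acting trivially means $a\gamma(a')\phi^{-n}(A) = a'\phi^{-n}(A)$ for every $n \ge 0$ and every $a' \in A$.

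First I would take $a' = 1$ (the root, and more generally the coset $\phi^{-n}(A)$ itself). This gives $a\gamma(1)\phi^{-n}(A) = \phi^{-n}(A)$, i.e. $a \in \phi^{-n}(A)$ for every $n$. Since $\phi$ is proper, $\bigcap_n \phi^{-n}(A) = \{1\}$, so $a = 1$. Thus the element is of the form $(1,\gamma)$, and the condition becomes $\gamma(a')\phi^{-n}(A) = a'\phi^{-n}(A)$ for all $a' \in A$ and all $n$, i.e. $a'^{-1}\gamma(a') \in \phi^{-n}(A)$ for all $n$. Applying properness again, $a'^{-1}\gamma(a') = 1$ for every $a' \in A$, so $\gamma = \id_A$. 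Hence $(a,\gamma) = (1,\id_A)$ is trivial, and the action is faithful.

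The two facts I rely on are: (1) the formula for the extended action, which is given in the excerpt, so that $(a,\gamma).(a'\phi^{-n}(A)) = a\gamma(a')\phi^{-n}(A)$, and this is a well-defined action precisely because $\phi$ is $\Gamma$-stable (so $\gamma$ permutes the cosets of each $\phi^{-n}(A)$); and (2) properness of $\phi$, namely $\bigcap_{n\in\N}\phi^{-n}(A) = \{1\}$, which is used twice — once to kill the $A$-part and once to kill the $\Gamma$-part. Note that surjectivity of $\phi$ is not strictly needed for faithfulness; it is presumably in the hypotheses because it guarantees $\tree_\phi \cong \tree_d$ for a fixed $d$, which is what is wanted downstream.

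I do not anticipate a real obstacle here: the argument is a direct unwinding of the definitions. The only point requiring a moment's care is confirming that evaluating the triviality condition at the specific cosets $\phi^{-n}(A)$ (rather than at all cosets) already forces $a \in \phi^{-n}(A)$, and then feeding the resulting information back in to isolate $\gamma$; both steps are immediate from the displayed action formula.
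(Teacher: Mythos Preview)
Your proof is correct and follows essentially the same approach as the paper's: both unwind the action formula to get $(a')^{-1}a\gamma(a')\in\phi^{-n}(A)$ for all $n$ and all $a'$, use properness to force this element to be trivial, and then separate out $a=1$ (via $a'=1$) and $\gamma=\id$ (via general $a'$). The only cosmetic difference is that the paper applies properness once to the general expression and then specializes, whereas you specialize first and invoke properness twice; your observation that surjectivity is not actually needed for faithfulness is also accurate.
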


\begin{proof}
Suppose $(a,\gamma)$ fixes every $a'\phi^{-n}(A)$, so $(a')^{-1}a\gamma(a')\in \phi^{-n}(A)$ for all $n\in\N$ and $a'\in A$. Since $\phi$ is proper, this implies that $(a')^{-1}a\gamma(a')=1$ for all $a'\in A$. The $a'=1$ case tells us that $a=1$, at which point the general $a'$ case tells us that $\gamma=\id$.
\end{proof}

Next we want conditions under which the action of $\Gamma$ on $\tree_\phi$ is self-similar. More precisely, when $\phi$ is surjective we have seen that $\tree_\phi\cong\tree_d$, for $d=[A\colon \phi^{-1}(A)]$, and we can get such an isomorphism by choosing some identification of the set of cosets of $\phi^{-1}(A)$ with the set $\{1,\dots,d\}$. Now call the action of $\Gamma$ on $\tree_\phi$ \emph{self-similar} if the induced action on $\tree_d$ is self-similar; this does not depend on the choose of identification with $\{1,\dots,d\}$.

\begin{lemma}[Self-similar]\label{lem:ss}
Let $\phi$ be a surjective, proper virtual endomorphism of a group $A$. Let $\Gamma\le\Aut(A)$ be such that $\phi$ and $\Gamma$ are mutually stable. Then the action of $A\rtimes\Gamma$ on $\tree_\phi$ is self-similar.
\end{lemma}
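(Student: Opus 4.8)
Since $\phi$ is surjective, all the indices $[\phi^{-n}(A):\phi^{-(n+1)}(A)]$ equal some fixed $d$ and $\tree_\phi\cong\tree_d$, as noted above; and by Lemma~\ref{lem:faithful} the action of $A\rtimes\Gamma$ on $\tree_\phi$ is faithful, so we may regard $A\rtimes\Gamma$ as a subgroup of $\Aut(\tree_\phi)=\Aut(\tree_d)$. Unwinding Definition~\ref{def:ss}, it then suffices to show that for every $g\in A\rtimes\Gamma$, with wreath recursion $g\leftrightarrow\sigma(g_1,\dots,g_d)$, each level-$1$ state $g_i$ again lies in $A\rtimes\Gamma$. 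The strategy is simply to compute these states explicitly; morally, this works because the tree of cosets ``reproduces itself'' under $\phi$, and $\Gamma$ is carried along precisely because $\phi$ and $\Gamma$ are mutually stable.

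First I would set up coordinates. Fix coset representatives $a_1,\dots,a_d$ of $\phi^{-1}(A)$ in $A$, so the children of the root of $\tree_\phi$ are the cosets $c_i=a_i\phi^{-1}(A)$, and the subtree $T_i$ hanging below $c_i$ has, at distance $k$ from $c_i$, the vertices $a_i b\phi^{-(k+1)}(A)$ for $b\in\phi^{-1}(A)$. Using that $\phi$ restricts to a bijection $\phi^{-1}(A)\to A$ (surjectivity is assumed; injectivity follows from properness), one checks that $a_i b\phi^{-(k+1)}(A)\mapsto\phi(b)\phi^{-k}(A)$ is a well-defined isomorphism of rooted trees $\iota_i\colon T_i\xrightarrow{\ \sim\ }\tree_\phi$, and these are exactly the canonical identifications used to extract level-$1$ states, so that $g_i=\iota_{\sigma(i)}\circ g\circ\iota_i^{-1}$.

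Now take $g=(a,\gamma)\in A\rtimes\Gamma$. Because $\phi$ is $\Gamma$-stable, $\gamma(\phi^{-1}(A))=\phi^{-1}(A)$, so $g$ sends $c_i=a_i\phi^{-1}(A)$ to the single coset $a\gamma(a_i)\phi^{-1}(A)$; this is $c_{\sigma(i)}$ for the permutation $\sigma\in S_d$ it defines, and we set $c_{(i)}:=a_{\sigma(i)}^{-1}a\gamma(a_i)\in\phi^{-1}(A)$. Tracking a vertex $a''\phi^{-k}(A)$ of $\tree_\phi$ through $\iota_i^{-1}$, then $g$, then $\iota_{\sigma(i)}$ — and using $\gamma(\phi^{-1}(A))=\phi^{-1}(A)$ to stay inside $\phi^{-1}(A)$, together with injectivity of $\phi$ to rewrite $\phi(\gamma(b))$ as $(\phi\gamma\phi^{-1})(\phi(b))$ for $b\in\phi^{-1}(A)$ — one finds that $g_i$ sends $a''\phi^{-k}(A)$ to $\phi(c_{(i)})\,(\phi\gamma\phi^{-1})(a'')\,\phi^{-k}(A)$. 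Since $\phi$ and $\Gamma$ are mutually stable, $\phi\gamma\phi^{-1}$ is a well-defined element of $\Gamma$, and comparing with the defining formula for the action of $A\rtimes\Gamma$ shows that $g_i$ is exactly the action of $(\phi(c_{(i)}),\phi\gamma\phi^{-1})\in A\rtimes\Gamma$. Thus every level-$1$ state of every element lies in $A\rtimes\Gamma$, and the action is self-similar.

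\textbf{Main obstacle.} No step is a genuine difficulty; the real work is the bookkeeping in the last paragraph: keeping the identifications $\iota_i$ straight, checking that the choice of representatives $a_i$ only affects intermediate expressions and not $g_i$ itself, and correctly passing between $\phi\circ\gamma$ and $(\phi\gamma\phi^{-1})\circ\phi$. That passage is where one must invoke exactly the right hypotheses — $\Gamma$-stability and properness (hence injectivity) of $\phi$ in the computation, with the clause $\phi\Gamma\phi^{-1}\le\Gamma$ of mutual stability needed only at the end to conclude $\phi\gamma\phi^{-1}\in\Gamma$.
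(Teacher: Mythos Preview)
Your proof is correct and follows essentially the same approach as the paper: both fix a transversal for $\phi^{-1}(A)$ in $A$, build the canonical identifications of the level-$1$ subtrees with $\tree_\phi$ via $\phi$, and then compute directly that the level-$1$ state of $(a,\gamma)$ at the child indexed by $a_i$ (resp.\ $t$) is $\bigl(\phi(a_{\sigma(i)}^{-1}a\gamma(a_i)),\,\phi\gamma\phi^{-1}\bigr)\in A\rtimes\Gamma$, invoking mutual stability only at the end. Your write-up is slightly more explicit about invoking faithfulness and the injectivity of $\phi$ coming from properness, but the argument is the same.
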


\begin{proof}
Rather than choosing an identification of the cosets of $\phi^{-1}(A)$ with $1,\dots,d$, which would become notationally cumbersome, let us equivalently just pin down how to identify the relevant subtrees of $\tree_\phi$ with $\tree_\phi$. Write $B$ for $\phi^{-1}(A)$, so $B$ is a finite index subgroup of $A$. Choose some transversal $T$ for $A/B$, so every coset of $A/B$ is represented by a unique element of $T$. Let $tB$ be a child of the root, for $t\in T$, and let $\tree_\phi(tB)$ be the subtree rooted at $tB$, i.e., the induced subgraph of $\tree_\phi$ whose vertex set consists of all the cosets contained in $tB$. Now we identify $\tree_\phi(tB)$ with $\tree_\phi$ via the isomorphism
\[
\delta_{tB} \colon \tree_\phi(tB) \to \tree_\phi
\]
sending $a\phi^{-(n+1)}(A)$ to $\phi(t^{-1}a)\phi^{-n}(A)$. Note that $\delta_{tB}^{-1}$ sends $a\phi^{-n}(A)$ to $t\phi^{-1}(a)\phi^{-(n+1)}(A)$.

Now let $(a,\gamma)\in A\rtimes \Gamma$, and we need to prove that the level-1 states of $(a,\gamma)$ lie in $A\rtimes \Gamma$. Let $tB$ be a child of the root, and let $t'B$ be the child of the root to which $(a,\gamma)$ sends $tB$, where here $t,t'\in T$. The level-1 state of $(a,\gamma)$ corresponding to $tB$ is the automorphism of $\tree_\phi$ given by
\[
\delta_{t'B} \circ (a,\gamma) \circ \delta_{tB}^{-1} \text{.}
\]
We compute that this sends the vertex $a'\phi^{-n}(A)$ to
\[
\phi\big((t')^{-1}a \gamma(t\phi^{-1}(a'))\big)\phi^{-n}(A)\text{,}
\]
which equals
\[
\phi\big((t')^{-1}a \gamma(t)\big) \gamma'(a') \phi^{-n}(A)\text{,}
\]
where $\gamma'\coloneqq \phi\circ\gamma\circ\phi^{-1}$. Since $\phi$ and $\Gamma$ are mutually stable, $\gamma'\in \Gamma$, so we are done.
\end{proof}

The culmination of all this is the following criterion for a group of a certain form to satisfy the Boone--Higman conjecture.

\begin{corollary}\label{cor:bh}
Let $A$ be a group with a surjective, proper virtual endomorphism $\phi$. Let $\Gamma\le\Aut(A)$ such that $\phi$ and $\Gamma$ are mutually stable. Then $A\rtimes\Gamma$ is a self-similar group. If it is finitely presented, then it, and hence all of its subgroups, satisfy the Boone--Higman conjecture.
\end{corollary}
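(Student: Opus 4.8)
The plan is to assemble the pieces already in hand. First I would invoke Lemma~\ref{lem:faithful} to see that the action of $A\rtimes\Gamma$ on $\tree_\phi$ is faithful, so that $A\rtimes\Gamma$ may be regarded as a subgroup of $\Aut(\tree_\phi)$. Since $\phi$ is surjective, the discussion following the definition of the tree of cosets shows that all the indices $[\phi^{-n}(A)\colon\phi^{-(n+1)}(A)]$ coincide with a common value $d$, and hence $\tree_\phi\cong\tree_d$. Thus $A\rtimes\Gamma$ is (up to this identification) a subgroup of $\Aut(\tree_d)$. Next I would apply Lemma~\ref{lem:ss}: since $\phi$ and $\Gamma$ are mutually stable, the action is self-similar, so $A\rtimes\Gamma$ is a self-similar group in the sense of Definition~\ref{def:ss}. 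This establishes the first assertion of the corollary.

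For the second assertion, assume $A\rtimes\Gamma$ is finitely presented. Then it is a finitely presented self-similar group, so Theorem~\ref{thrm:main} applies directly: $A\rtimes\Gamma$ embeds in a finitely presented simple group, and hence satisfies the Boone--Higman conjecture. Finally, as recalled in the introduction, satisfying the Boone--Higman conjecture is inherited by subgroups (any subgroup of a group embedding in a finitely presented simple group itself embeds in that same group, so trivially in a finitely presented simple group), so every subgroup of $A\rtimes\Gamma$ satisfies the conjecture as well.

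There is essentially no hard step here; the work has all been done in Lemmas~\ref{lem:faithful} and~\ref{lem:ss} and in Theorem~\ref{thrm:main}. The only point requiring a moment's care is the bookkeeping that makes Definition~\ref{def:ss} literally applicable, namely observing that surjectivity of $\phi$ forces $\tree_\phi$ to be $d$-regular for a single $d$, so that ``self-similar group'' in the sense of Lemma~\ref{lem:ss} (an action on the abstract regular tree $\tree_\phi$) matches ``self-similar group'' in the sense of Definition~\ref{def:ss} (a subgroup of $\Aut(\tree_d)$). After that identification the corollary is immediate.
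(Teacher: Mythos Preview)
Your proposal is correct and follows essentially the same approach as the paper's own proof: invoke Lemmas~\ref{lem:faithful} and~\ref{lem:ss} to see that $A\rtimes\Gamma$ acts faithfully and self-similarly on $\tree_\phi$, and then apply Theorem~\ref{thrm:main} in the finitely presented case. You have simply spelled out a bit more of the bookkeeping (the regularity of $\tree_\phi$ and the inheritance by subgroups) than the paper does.
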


\begin{proof}
By Lemmas~\ref{lem:faithful} and~\ref{lem:ss}, $A\rtimes\Gamma$ admits a faithful, self-similar action on the tree $\tree_\phi$, hence is a self-similar group. If it is finitely presented, then we conclude with Theorem~\ref{thrm:main}.
\end{proof}

\subsection{Examples}\label{ssec:examples}

First let us quickly recover the fact that $\Aff_n(\Z)=\Z^n\rtimes\GL_n(\Z)$ satisfies the Boone--Higman conjecture, via what essentially amounts to the same proof as in \cite{scott84} in different language. See also \cite{brunner98} and \cite[Lemma~6.3]{kionke23}.

\begin{example}\label{ex:GLnZ}
Let $A=\Z^n$ and let $\phi\colon (2\Z)^n\to\Z^n$ be $(2m_1,\dots,2m_n)\mapsto (m_1,\dots,m_n)$. This is a surjective, proper virtual endomorphism. Clearly $\phi$ and $\GL_n(\Z)$ are mutually stable, and $\Z^n\rtimes\GL_n(\Z)$ is finitely presented, so we conclude from Corollary~\ref{cor:bh} that $\Z^n\rtimes\GL_n(\Z)$ satisfies the Boone--Higman conjecture.
\end{example}

In fact we can generalize this quite a bit, as follows.

\begin{example}\label{ex:ring}
Let $R$ be a ring, viewed as an additive abelian group, and suppose $R$ has a principal left ideal $J=Rx$ of finite index. Assume $x$ is not a zero-divisor, so we have a group homomorphism $\varphi\colon J\to R$ given by $rx\mapsto r$. For $n\in\N$, let $A=R^{\oplus n}$ and let $\phi\colon J^{\oplus n}\to R^{\oplus n}$ be the direct sum of $n$ copies of $\varphi$. This is a surjective virtual endomorphism of $A$. Assume that $\bigcap_{k\in\N}Rx^k = \{0\}$; for example this holds if $R$ is a noetherian integral domain and $J$ is proper, thanks to the Krull intersection theorem. Since $\phi^{-k}(A)=(Rx^k)^{\oplus n}$, this means that the virtual endomorphism $\phi$ is proper. Note that in this case, $\phi$ is injective, and the inverse $\phi^{-1}\colon R^{\oplus n}\to J^{\oplus n}$ is given by scaling by $x$ on the right. Finally, let $\Gamma=\GL_n(R)$. Since $J$ is a left ideal, and since scaling by $x$ on the right commutes with multiplying by a matrix on the left, $\phi$ and $\GL_n(R)$ are mutually stable. Now we conclude from Corollary~\ref{cor:bh} that $R^{\oplus n} \rtimes \GL_n(R)$ is self-similar, and if it is also finitely presented then it satisfies the Boone--Higman conjecture.
\end{example}

As a concrete example, we can now prove Theorem~\ref{thrm:Q}, that every finitely generated subgroup of $\GL_n(\Q)$ satisfies the Boone--Higman conjecture.

\begin{proof}[Proof of Theorem~\ref{thrm:Q}]
Every finitely generated subgroup of $\GL_n(\Q)$ lies in $\GL_n(\Z[1/m])$ for some $m$, so thanks to Theorem~\ref{thrm:main} it is enough to prove that $\Z[1/m]^{\oplus n}\rtimes \GL_n(\Z[1/m])$ is finitely presented and self-similar. For self-similarity, we use Example~\ref{ex:ring}. Let $p$ be a prime not dividing $m$ and let $J=p\Z[1/m]$, so $J$ is a proper finite index principal ideal in $\Z[1/m]$. We therefore have a proper virtual endomorphism $\phi \colon J^{\oplus n} \to \Z[1/m]^{\oplus n}$, which is mutually stable with $\GL_n(\Z[1/m])$. Hence $\Z[1/m]^{\oplus n}\rtimes \GL_n(\Z[1/m])$ is self-similar. For finite presentability, we can prove this ``by hand'' as follows, as suggested to us by Francesco Fournier-Facio. First note that $\GL_n(\Z[1/m])$ is finitely presented \cite{borel76}, and $\Z[1/m]$ has presentation $\langle x_0,x_1,\dots\mid x_{i+1}^m=x_i$ for all $i\ge 0\rangle$. Thus $\Z[1/m]^{\oplus n}\rtimes \GL_n(\Z[1/m])$ has a presentation with a generator $x_i(j)$ for each $i\ge 0$ and $1\le j\le n$, plus finitely many other generators for the $\GL_n(\Z[1/m])$ factor, and defining relations as follows:
\begin{enumerate}
    \item $(x_{i+1}(j))^m=x_i(j)$ for all $i\ge 0$ and $1\le j\le n$,
    \item $x_i(j) x_k(\ell) = x_k(\ell) x_i(j)$ for all $i,k\ge 0$ and $1\le j,\ell\le n$ with $j\ne\ell$,
    \item $\gamma x_i(j) \gamma^{-1} = \gamma * x_i(j)$ for all generators $\gamma$ of $\GL_n(\Z[1/m])$ and all $i\ge 0$ and $1\le j\le n$, and
    \item finitely many other defining relations, for the $\GL_n(\Z[1/m])$ factor.
\end{enumerate}
Here $\gamma * x$ denotes the matrix $\gamma$ times the vector $x$. Now we want to reduce this to a finite presentation. For each $1\le j\le n$, let $\gamma_j\in \GL_n(\Z[1/m])$ be the diagonal matrix with $1/m$ in the $j$th diagonal entry and $1$ in the other diagonal entries, so $\gamma_j * x_i(j)=x_{i+1}(j)$, and $\gamma_j * x_i(\ell)=x_i(\ell)$ for $\ell\ne j$. Assume without loss of generality that the $\gamma_j$ are in our finite generating set for $\GL_n(\Z[1/m])$. Now for each $i>0$ and $1\le j\le n$, we can remove the generator $x_i(j)$ and replace all instances of it in the defining relations with $(\gamma_j)^i x_0(j) (\gamma_j)^{-i}$. At this point we have reduced the presentation to finitely many generators, and up to some straightforward manipulations the defining relations have become
\begin{enumerate}
    \item $\gamma_j x_0(j)^m \gamma_j^{-1} = x_0(j)$ for all $1\le j\le n$,
    \item each of $\gamma_j$ and $x_0(j)$ commutes with each of $\gamma_\ell$ and $x_0(\ell)$ for all $1\le j,\ell\le n$ with $j\ne\ell$,
    \item $\gamma x_0(j) \gamma^{-1} = \gamma*x_0(j)$ for all generators $\gamma$ of $\GL_n(\Z[1/m])$ and all $1\le j\le n$, and
    \item finitely many other defining relations.
\end{enumerate}
This is a finite set of relations, so we are done.
\end{proof}

\bibliographystyle{alpha}

\begin{thebibliography}{BBMZb}

\bibitem[BBMZa]{bbmz_hyp}
James Belk, Collin Bleak, Francesco Matucci, and Matthew C.~B. Zaremsky.
\newblock Hyperbolic groups satisfy the {B}oone-{H}igman conjecture.
\newblock arXiv:2309.06224.

\bibitem[BBMZb]{bbmz_survey}
James Belk, Collin Bleak, Francesco Matucci, and Matthew C.~B. Zaremsky.
\newblock Progress around the {B}oone-{H}igman conjecture.
\newblock arXiv:2306.16356.

\bibitem[BH74]{boone74}
William~W. Boone and Graham Higman.
\newblock An algebraic characterization of groups with soluble word problem.
\newblock {\em J. Austral. Math. Soc.}, 18:41--53, 1974.
\newblock Collection of articles dedicated to the memory of Hanna Neumann, IX.

\bibitem[BM]{belkmatuccicontracting}
James Belk and Francesco Matucci.
\newblock Boone--{H}igman embeddings for contracting self-similar groups.
\newblock arXiv:2405.10234.

\bibitem[BS76]{borel76}
A.~Borel and J.-P. Serre.
\newblock Cohomologie d'immeubles et de groupes {$S$}-arithm\'etiques.
\newblock {\em Topology}, 15(3):211--232, 1976.

\bibitem[BS98]{brunner98}
A.~M. Brunner and Said Sidki.
\newblock The generation of {${\rm GL}(n,\bf Z)$} by finite state automata.
\newblock {\em Internat. J. Algebra Comput.}, 8(1):127--139, 1998.

\bibitem[Gri80]{grigorchuk80}
R.~I. Grigor\v{c}uk.
\newblock On {B}urnside's problem on periodic groups.
\newblock {\em Funktsional. Anal. i Prilozhen.}, 14(1):53--54, 1980.

\bibitem[Hig74]{higman74}
Graham Higman.
\newblock {\em Finitely presented infinite simple groups}.
\newblock Notes on Pure Mathematics, No. 8. Australian National University, Department of Pure Mathematics, Department of Mathematics, I.A.S., Canberra, 1974.

\bibitem[KK48]{KK}
L\'eo Kaloujnine and Marc Krasner.
\newblock Le produit complet des groupes de permutations et le probl\`eme d'extension des groupes.
\newblock {\em C. R. Acad. Sci. Paris}, 227:806--808, 1948.

\bibitem[KS23]{kionke23}
Steffen Kionke and Eduard Schesler.
\newblock Amenability and profinite completions of finitely generated groups.
\newblock {\em Groups Geom. Dyn.}, 17(4):1235--1258, 2023.

\bibitem[Nek02]{nekrashevych02}
Volodymyr Nekrashevych.
\newblock Virtual endomorphisms of groups.
\newblock {\em Algebra Discrete Math.}, (1):88--128, 2002.

\bibitem[Nek04]{nekrashevych04}
Volodymyr~V. Nekrashevych.
\newblock Cuntz-{P}imsner algebras of group actions.
\newblock {\em J. Operator Theory}, 52(2):223--249, 2004.

\bibitem[Nek05]{nekrashevych05}
Volodymyr Nekrashevych.
\newblock {\em Self-similar groups}, volume 117 of {\em Mathematical Surveys and Monographs}.
\newblock American Mathematical Society, Providence, RI, 2005.

\bibitem[Nek18]{nekrashevych18}
Volodymyr Nekrashevych.
\newblock Finitely presented groups associated with expanding maps.
\newblock In {\em Geometric and cohomological group theory}, volume 444 of {\em London Math. Soc. Lecture Note Ser.}, pages 115--171. Cambridge Univ. Press, Cambridge, 2018.

\bibitem[R{\"{o}}v99]{roever99}
Claas~E. R{\"{o}}ver.
\newblock Constructing finitely presented simple groups that contain {G}rigorchuk groups.
\newblock {\em J. Algebra}, 220(1):284--313, 1999.

\bibitem[Sco84a]{scott84general}
Elizabeth~A. Scott.
\newblock A construction which can be used to produce finitely presented infinite simple groups.
\newblock {\em J. Algebra}, 90(2):294--322, 1984.

\bibitem[Sco84b]{scott84}
Elizabeth~A. Scott.
\newblock The embedding of certain linear and abelian groups in finitely presented simple groups.
\newblock {\em J. Algebra}, 90(2):323--332, 1984.

\bibitem[SWZ19]{skipper19}
Rachel Skipper, Stefan Witzel, and Matthew C.~B. Zaremsky.
\newblock Simple groups separated by finiteness properties.
\newblock {\em Invent. Math.}, 215(2):713--740, 2019.

\end{thebibliography}

\end{document}